\newtheorem {theorem}{Theorem}
\newtheorem{remark}[theorem]{Remark}
\newtheorem{corollary}[theorem]{Corollary}
\newcommand{\R}{\mathbb R}
\newcommand{\N}{\mathbb N}
\begin{document}

\title{On the Dirichlet problem associated with Dunkl Laplacian}
\author{ Ben Chrouda Mohamed\\
 {\small High Institute of Informatics and Mathematics}\\
 {\small 5000  Monastir, Tunisia}\\
 {\small E-mail: benchrouda.ahmed@gmail.com}}

\maketitle
\begin{abstract} This paper is devoted to the study of the Dirichlet problem associated with the Dunkl Laplacian $\Delta_k$.
We establish, under some condition on a bounded domain $D$ of $\R^d$, the existence of a unique continuous function $h$ on $\R^d$ such that  $\Delta_kh=0$ on $D$ and $h=f$ on $\R^d\setminus D$ the complement of $D$ in $\R^d$, where
 the function $f$ is asumed to be continuous. We also give an analytic formula characterizing the solution $h$.
\end{abstract}
\section{Introduction}
Let $R$ be a root system in $\R^d$, $d\geq1$, and we fix a positive subsystem $R_+$ of $R$ and a nonnegative multiplicity function $k:R\to\R_+$.
For every $\alpha\in R$, let $H_\alpha$ be the hyperplane  orthogonal to $\alpha$ and $\sigma_\alpha$ be the reflection with respect to~$H_\alpha$, that is, for every~$x\in\R^d$,
$$
\sigma_\alpha x=x-2 \frac{\langle x,\alpha\rangle}{|\alpha|^2}\alpha
$$
where $\langle\cdot,\cdot\rangle$ denotes the Euclidean inner product of $\R^d$.
The Dunkl Laplacian $\Delta_k$ is defined \cite{dunkl1},  for  $f\in C^2(\R^d)$, by
$$
\Delta_kf(x)=\Delta f(x)+2\sum_{\alpha\in R_+}k(\alpha)\left(\frac{\langle\nabla f(x),\alpha\rangle}{\langle\alpha,x\rangle}-\frac{|\alpha|^2}2\frac{f(x)-f(\sigma_\alpha x)}{\langle\alpha,x\rangle^2}\right),
$$
where $\nabla$ denotes the  gradient on~$\R^d$. Obviously, $\Delta_k=\Delta$ when $k\equiv0$.

Given a bounded open subset $D$ of $\R^d$, we consider the following Dirichlet problem~:
\begin{equation}\label{ddp}
\displaystyle\left\{
\begin{array}{rcll}
 \Delta_kh & = & 0 & \mbox{on }\;D, \\
  h & = & f& \mbox{on }\; \R^d\setminus D,
\end{array}
\right.
\end{equation}
where
  $f$ is a continuous function on $\R^d\setminus D$. When $D$ is invariant under all reflections $\sigma_\alpha$, it was shown in \cite{mbke}, using probabilistic  tools from potential theory, that  there exists a unique continuous function $h$ on $\R^d$, twice differentiable on~$D$ and such that both equations in~(\ref{ddp}) are pointwise fulfilled.  In this paper, we shall investigate problem (\ref{ddp}) for a bounded domain $D$  which is not invariant. Let $D$ be a bounded open set such that its closure $\overline{D}$ is in some Domain of $\R^d\setminus\cup_{\alpha\in R_+}H_\alpha$.
We mean by a solution of problem (\ref{ddp}), every function $h : \R^d\to\R$ which is continuous on $\R^d$ such that $h=f$ on $\R^d\setminus D$ and
$$
\int_{\R^d}h(x)\Delta_k\varphi(x) w_k(x) dx =0\quad \textrm{ for every }\; \varphi \in C^\infty_c(D),
$$
where $C^\infty_c(D)$ denotes the space of infinitely differentiable functions on $D$  with compact support  and $w_k$ is the invariant weight function defined on $\R^d$ by
$$
w_k(x)=\prod_{\alpha\in
R_+}\langle x,\alpha\rangle^{2k(\alpha)}.
$$
The set $D$ is called $\Delta_k$-regular if, for every continuous function $f$ on $\R^d\setminus D$, problem (\ref{ddp}) admits one and only one solution; this solution will be denoted by $H_D^{\Delta_k}f$.
By transforming  problem (\ref{ddp})  to a  boundary value problem associated with Schr\"{o}dinger's operator $\Delta-q$, we show that $D$ is $\Delta_k$-regular provided it is $\Delta$-regular. We also give an analytic formula characterizing the solution $H_D^{\Delta_k}f$ (see Theorem \ref{t1} below). We derive from this formula that, for every $x\in D$, $H_D^{\Delta_k}f(x)$ depends only on the values of $f$ on $\cup_{\alpha\in R_+}\sigma_\alpha(D)$ and on $\partial D$ the Euclidean boundary of $D$.
If, in addition, we assume that $f$ is locally H\"{o}lder continuous on $\cup_{\alpha\in R_+}\sigma(D)$ then  $H_D^{\Delta_k}f$ is continuously twice differentiable on $D$ and therefore the first equation in~(\ref{ddp}) is  fulfilled by $H_D^{\Delta_k}f$ not only in the sense of distributions but also pointwise.

It was shown in \cite{kh,hmkt} that the operator $\Delta_k$ is hypoelliptic on all invariant open subset $D$ of $\R^d$. However, if $D$ is not invariant, the question whether $\Delta_k$ is hypoelliptic on $D$ or not remaind open. For $\Delta_k$-regular open set $D$, we show that if $D$ is not invariant then $\Delta_k$ is not hypoelliptic in $D$. Hence the condition " $D$ is invariant" is necessary and sufficient for the hypoellipticity of  $\Delta_k$ on $D$.
\section{Main results}
We first present various facts on the Dirichlet boundary value problem associated with Schr\"{o}dinger's operator which are needed for our approach.
We refer to \cite{abwhhh,kczz} for  details.
Let $G$ be the Green function on $\R^d$, but without the constant factors :
$$
G(x,y)=
\left\{
  \begin{array}{ll}
    |x-y|^{2-d} & \hbox{if}\; d\geq 3; \\
    \ln\frac{1}{|x-y|} & \hbox{if}\; d=2; \\
    |x-y| & \hbox{if}\; d=1.
  \end{array}
\right.
$$
Let $D$ be a bounded domain of $\R^d$ and let $q\in J(D)$ the Kato class on $D$, i.e., $q$ is a Borel measurable function on $\R^d$ such that $G(1_D|q|)$ the Green potential of $1_D|q|$ is continuous on $\R^d$.
Note that the Kato class $J(D)$ contains all bounded Borel measurable functions on $D$.
Assume that  $D$ is $\Delta$-regular. Then, for every continuous function $f$ on  $\partial D$, there exists a unique continuous function $h$ on  $\overline{D}$  such that $h=f$ on $\partial D$ and
\begin{equation}\label{shr}
\int h(x)(\Delta-q)\varphi(x) dx=0 \quad \textrm{ for every}\; \varphi\in C^\infty_c(D).
\end{equation}
In the sequel, we denote $H_D^{\Delta-q}f$ the unique continuous extension on $\overline{D}$ of $f$ which satisfies the Schr\"{o}dinger's equation (\ref{shr}).
Let $G_D^\Delta$ and $G_D^{\Delta-q}$ denotes, respectively, the Green potential operator in $D$ of  $\Delta$ and  $\Delta-q$. The operator $G_D^{\Delta-q}$ acts as a right inverse of the Schr\"{o}dinger's operator $-(\Delta-q)$, i.e., for every Borel bounded function $g$ on $D$, we have
$$
\int G_D^{\Delta-q}g(x)(\Delta-q)\varphi(x) dx= -\int g(x)\varphi(x) dx \quad \textrm{ for every}\; \varphi\in C^\infty_c(D).
$$
Then the unique continuous function $h$ on $\overline{D}$ such that $h=f$ on $\partial D$ and
\begin{equation}\label{shrg}
\int h(x)(\Delta-q)\varphi(x) dx= -\int g(x)\varphi(x) dx \quad \textrm{ for every}\; \varphi\in C^\infty_c(D)
\end{equation}
is given, for $x\in D$, by
\begin{equation}\label{ss}
h(x)= H_D^{\Delta-q}f(x)+G_D^{\Delta-q}g(x).
\end{equation}
The function $G_D^{\Delta-q}g$ is continuous on $\overline{D}$, vanishing on $\R^d\setminus D$ and, for every $x\in D$,
\begin{equation}\label{vgd}
G_D^{\Delta-q}g(x)=G_D^\Delta g(x)- G_D^\Delta(qG_D^{\Delta-q}g)(x).
\end{equation}
Moreover, if, in addition, we assume  that $q\in C^\infty(D)$ then, proceeding by induction, it follows from (\ref{vgd}) that $G_D^{\Delta-q}g\in C^n(D)$ if and only if $G_D^\Delta g\in C^n(D),\; n\in \N$.

Now we are ready to establish our first main result giving a characterization of solutions of the Dirichlet boundary value problem associated with the Dunkl Laplacian $\Delta_k$.

\begin{theorem}\label{t1}
Let $D$ be a bounded open set such that $\overline{D}$ is in some Domain of $\R^d\setminus\cup_{\alpha\in R_+}H_\alpha$. If $D$ is $\Delta$-regular then $D$ is $\Delta_k$-regular. Moreover, for every continuous function $f$ on $\R^d\setminus D$ and for every $x\in D$,
\begin{equation}\label{solhar}
H_D^{\Delta_k}f(x)=\frac{1}{\sqrt{w_k(x)}}\left(H_D^{\Delta-q}(f\sqrt{w_k})(x)+G_D^{\Delta-q}\left(\sqrt{w_k}Nf\right)(x)\right),
\end{equation}
where $q$ and $Nf$ are the functions defined, for $x\in D$, by
$$
q(x):=\sum_{\alpha\in R_+}\left(\frac{|\alpha|k(\alpha)}{\langle x,\alpha\rangle}\right)^2
$$
and
$$
Nf(x):=\sum_{\alpha\in R_+}\frac{|\alpha|^2k(\alpha)}{\langle x,\alpha\rangle^2}f(\sigma_\alpha x).
$$
\end{theorem}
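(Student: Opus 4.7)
The key idea, suggested by the form of the statement, is a conjugation by $\phi:=\sqrt{w_k}$. I would set $u(x):=\phi(x)h(x)$ for $x\in\overline{D}$. Since $\overline{D}$ is contained in a single connected component (Weyl chamber) of $\R^d\setminus\bigcup_{\alpha\in R_+}H_\alpha$, the function $\phi$ is smooth and strictly positive on a neighbourhood of $\overline{D}$ and, for every $x\in D$ and every $\alpha\in R_+$, the reflected point $\sigma_\alpha x$ lies in $\R^d\setminus\overline{D}$, so that $h(\sigma_\alpha x)=f(\sigma_\alpha x)$; the reflected copies $\sigma_\alpha(D)$ are also pairwise disjoint from $D$.

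The core pointwise identity I would establish on $D$ is
$$
\phi\,\Delta_k h=(\Delta-q)u+\phi\,Nf.
$$
Writing $\Delta_k=L-M$ with $Lh=w_k^{-1}\nabla\!\cdot\!(w_k\nabla h)$ and $Mh(x)=\sum_\alpha k(\alpha)|\alpha|^2[h(x)-h(\sigma_\alpha x)]/\langle\alpha,x\rangle^2$, the standard conjugation gives $\phi Lh=\Delta u-(\Delta\phi/\phi)u$, while the relation $h(\sigma_\alpha x)=f(\sigma_\alpha x)$ yields $\phi Mh=c\,u-\phi Nf$ with $c(x):=\sum_\alpha k(\alpha)|\alpha|^2/\langle\alpha,x\rangle^2$. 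The required equality $\Delta\phi/\phi+c=q$ reduces, via $\Delta\phi/\phi=\Delta\log\phi+|\nabla\log\phi|^2$, to the classical Calogero cancellation
$$
\sum_{\alpha\neq\beta\in R_+}k(\alpha)k(\beta)\frac{\langle\alpha,\beta\rangle}{\langle\alpha,x\rangle\langle\beta,x\rangle}=0,
$$
valid for any Weyl-invariant multiplicity.

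I would then promote this to the distributional level. For $\varphi\in C_c^\infty(D)$, $\Delta_k\varphi$ is supported in $D\cup\bigcup_\alpha\sigma_\alpha(D)$: it equals $L\varphi-c\varphi$ on $D$ and reduces to $k(\alpha)|\alpha|^2\varphi(\sigma_\alpha x)/\langle\alpha,x\rangle^2$ on each $\sigma_\alpha(D)$. Splitting $\int_{\R^d}h\,\Delta_k\varphi\,w_k\,dx$ accordingly, changing variables $y=\sigma_\alpha x$ on the reflected pieces (using $w_k\circ\sigma_\alpha=w_k$ and $h=f$ off $D$), and introducing $\psi:=\phi\varphi$, which ranges over $C_c^\infty(D)$ as $\varphi$ does, the Dunkl equation for $h$ is transformed into
$$
\int_D u\,(\Delta-q)\psi\,dx=-\int_D \phi\,Nf\,\psi\,dx,\qquad\forall\,\psi\in C_c^\infty(D).
$$

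Finally I would apply the Schr\"odinger Dirichlet theory recalled in Section~2. Since $\overline{D}$ avoids the hyperplanes, $q$ is smooth and bounded on $\overline{D}$, hence in $J(D)$; $\phi f$ is continuous on $\partial D$ and $\phi Nf$ is bounded and continuous on $D$. By the $\Delta$-regularity of $D$ and (\ref{shrg})--(\ref{ss}), there is a unique continuous $u$ on $\overline{D}$ with $u=\phi f$ on $\partial D$ solving the above weak Schr\"odinger problem, namely $u=H_D^{\Delta-q}(\phi f)+G_D^{\Delta-q}(\phi Nf)$. Setting $h:=u/\phi$ on $\overline{D}$ and $h:=f$ on $\R^d\setminus D$ gives a continuous function on $\R^d$ that both satisfies the announced formula and solves (\ref{ddp}); uniqueness transfers from the Schr\"odinger uniqueness, since for $f\equiv0$ one has $Nf\equiv0$, forcing $u\equiv0$. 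The main obstacle I anticipate is the bookkeeping in the distributional conjugation, specifically handling the non-local contributions of $\Delta_k\varphi$ on the reflected sets $\sigma_\alpha(D)$ and checking that the Calogero cross-term cancellation produces exactly the stated potential $q$ rather than some other rational function of the $\langle\alpha,x\rangle$.
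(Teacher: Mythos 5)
Your proposal is correct and follows essentially the same route as the paper: conjugation by $\sqrt{w_k}$ reducing the Dunkl Dirichlet problem to a weak Schr\"odinger problem $(\Delta-q)u=-\sqrt{w_k}\,Nf$ with boundary data $f\sqrt{w_k}$, using the same Dunkl/Calogero cross-term cancellation to identify the potential $q$ and the same change of variables on the reflected sets to turn the non-local term into the source $Nf$. The paper carries out the conjugation on the test function $\varphi$ (its identity (\ref{ddk})) rather than on $h$ pointwise, but this is only a difference of presentation, not of substance.
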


\begin{proof}
Let $f$ be a continuous function on $\R^d\setminus D$. We intend to prove existence and uniqueness of a continuous function $h$ on $D$ such that $h=f$ on $\R^d\setminus D$ and
\begin{equation}\label{har}
\int h(x)\Delta_k\varphi(x) w_k(x) dx=0 \quad \textrm{ for every}\; \varphi\in C^\infty_c(D).
\end{equation}
It is clear that
$$
\nabla\left(\sqrt{w_k}\right)(x)= \sqrt{w_k(x)}\sum_{\alpha\in R_+}\frac{k(\alpha)}{\langle x,\alpha\rangle}\alpha.
$$
Then, using the fact that \cite{dunkl1}
$$
\sum_{\alpha, \beta\in R_+}k(\alpha)k(\beta)\frac{\langle\alpha , \beta\rangle}{\langle x , \alpha\rangle\;\langle x , \beta\rangle}=\sum_{\alpha\in R_+}\frac{|\alpha|^2k^2(\alpha)}{\langle x,\alpha\rangle^2},
$$
direct computation  shows that
$$
\Delta\left(\sqrt{w_k}\right)(x)= \sqrt{w_k(x)}\sum_{\alpha\in R_+}|\alpha|^2\frac{k^2(\alpha)-k(\alpha)}{\langle x,\alpha\rangle^2}.
$$
Thus, for every $\varphi\in C^\infty_c(D)$,
\begin{eqnarray*}
\Delta\left(\varphi\sqrt{w_k}\right)(x)&=& \sqrt{w_k(x)}\left(\Delta\varphi(x)+ 2\sum_{\alpha\in R_+}k(\alpha)\left(\frac{\langle\nabla \varphi(x),\alpha\rangle}{\langle\alpha,x\rangle}-\frac{|\alpha|^2}{2}\frac{\varphi(x)}{\langle\alpha,x\rangle^2}\right)\right)\\
   & & +\; q(x)\varphi(x)\sqrt{w_k(x)},
\end{eqnarray*}
and thereby
\begin{equation}\label{ddk}
\sqrt{w_k(x)}\Delta_k\varphi(x)=\left(\Delta\left(\varphi\sqrt{w_k}\right)(x)- q(x)\varphi(x)\sqrt{w_k(x)}\right)+ \sqrt{w_k(x)}N\varphi(x).
\end{equation}
Since the map $ \varphi\to\varphi\sqrt{w_k}$ is invertible on the space $C^\infty_c(D)$ and the function $x\to\frac{w_k(x)}{\langle x,\alpha\rangle^2}$ is invariant under the reflection $\sigma_\alpha$, equation (\ref{har}) is equivalent to the following Schr\"{o}dinger's equation : For every $\psi\in C^\infty_c(D)$,
$$
\int h(x)\sqrt{w_k(x)}\left(\Delta-q\right)\psi(x) dx= -\int \sqrt{w_k(x)}Nf(x)\psi(x) dx.
$$
Finally, since $q$ is bounded on $D$ and therefore is in $J(D)$, the statements follow from (\ref{shrg}) and (\ref{ss}).
\end{proof}

To construct a $\Delta$-regular set $D$, it suffices to choose $D$ such that its Euclidean boundary $\partial D$  satisfies the the geometric assumption known as " cone condition", i.e.,   for every $z\in \partial D$ there exists a cone $C$ of vertex $z$ such that $C\cap B(z,r)\subset \R^d\setminus D$ for some $r>0$, where $B(z,r)$ is the ball of center $z$ and radius $r$  (see, for example, \cite{kczz}).
\begin{remark}\rm
Note that, in order to obtain $q\in J(D)$, the hypothesis of the above theorem "$\overline{D}\subset\R^d\setminus\cup_{\alpha\in R_+}H_\alpha$" is nearly optimal. Indeed, assume that there exists a cone $C_z$ of vertex $z\in \overline{D}\cap H_\alpha$ for some $\alpha\in R_+$ with $k(\alpha)\neq 0$ such that $C_z^r := C_z\cap B(z,r)\subset D$ for some $r>0$. Then,
\begin{eqnarray*}
G(1_Dq)(z) & \geq & |\alpha|^2k^2(\alpha)\int_{C_z^r}G(z,y)\frac{1}{\langle y,\alpha\rangle^2} dy\\
& = & |\alpha|^2k^2(\alpha)\int_{C_z^r}G(z,y)\frac{1}{\langle z-y,\alpha\rangle^2} dy\\
&\geq& k^2(\alpha) \int_{C_z^r-z}G(0,y)\frac{1}{|y|^2} dy \\
&=& \infty.
\end{eqnarray*}
\end{remark}
It is easy to see  that for every $x\in D$ the map $f\to H_D^{\Delta_k}f(x)$ defines a positive Radon measure on $\R^d\setminus D$. We denote this measure by $H_D^{\Delta_k}(x,dy)$. The following results are obtained in a convenient way by using formula (\ref{solhar}) of the above theorem.
\begin{corollary}
 For every $x\in D$, $H_D^{\Delta_k}(x,dy)$ is a probability measure supported by
$$
\partial D\cup \left(\cup_{\alpha\in R_+}\sigma_\alpha(D)\right)
$$
and satisfies
$$
\frac{\sqrt{w_k(x)}}{\sqrt{w_k(y)}}H_D^{\Delta_k}(x,dy)=H_D^{\Delta-q}(x,dy)+\sum_{\alpha\in R_+}\frac{|\alpha|^2k(\alpha)}{\langle y,\alpha\rangle^2}G_D^{\Delta-q}(x,\sigma_\alpha y) dy.
$$
\end{corollary}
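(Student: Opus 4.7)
My plan is to read the measure $H_D^{\Delta_k}(x,dy)$ directly off the explicit formula~(\ref{solhar}) of Theorem~\ref{t1} by expressing every term on the right-hand side as an integral against $f$.

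First, I would expand the two contributions: $H_D^{\Delta-q}(f\sqrt{w_k})(x)$ is the integral of $f(y)\sqrt{w_k(y)}$ against the harmonic measure $H_D^{\Delta-q}(x,dy)$ carried by $\partial D$, whereas $G_D^{\Delta-q}(\sqrt{w_k}Nf)(x)$, once $Nf$ is expanded, becomes a sum over $\alpha\in R_+$ of integrals over $D$ of $f(\sigma_\alpha y)$ against the kernel $|\alpha|^2k(\alpha)\,G_D^{\Delta-q}(x,y)\sqrt{w_k(y)}/\langle y,\alpha\rangle^2$.

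The crux of the argument is the change of variables $z=\sigma_\alpha y$ in each of these summands. Three ingredients make it work: $\sigma_\alpha$ is an isometry, so Lebesgue measure is preserved; the identity $\langle\sigma_\alpha z,\alpha\rangle=-\langle z,\alpha\rangle$ leaves $\langle y,\alpha\rangle^2$ unchanged; and $w_k$ is invariant under every reflection $\sigma_\alpha$, a fact already exploited in the proof of Theorem~\ref{t1} through the invariance of $w_k(y)/\langle y,\alpha\rangle^2$. After substitution, the domain $D$ is transported to $\sigma_\alpha(D)$, and the $\alpha$-summand becomes
$$
|\alpha|^2k(\alpha)\int_{\sigma_\alpha(D)}G_D^{\Delta-q}(x,\sigma_\alpha z)\,\frac{\sqrt{w_k(z)}}{\langle z,\alpha\rangle^2}\,f(z)\,dz.
$$

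Assembling both contributions and dividing by $\sqrt{w_k(x)}$, the coefficient of $f(y)$ reproduces exactly the density in the statement. Since $G_D^{\Delta-q}(x,\sigma_\alpha y)$ vanishes unless $y\in\sigma_\alpha(D)$, and since the standing hypothesis $\overline{D}\subset\R^d\setminus\cup_{\alpha\in R_+}H_\alpha$ places $\overline{D}$ and each reflected copy $\sigma_\alpha(\overline{D})$ in distinct connected components of the complement of the hyperplane arrangement, the support of the total measure is $\partial D\cup\bigl(\bigcup_{\alpha\in R_+}\sigma_\alpha(D)\bigr)$ without overlap between the boundary piece and the absolutely continuous pieces. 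Positivity of $H_D^{\Delta_k}(x,dy)$ was recorded just before the corollary; unit mass follows by applying (\ref{solhar}) to $f\equiv 1$ and invoking uniqueness in Theorem~\ref{t1} together with the trivial identity $\Delta_k 1=0$. The only delicate point I foresee is the careful verification of the $\sigma_\alpha$-invariance of $w_k$ and of the change-of-variables bookkeeping; beyond this, the proof is a direct unpacking of the formula.
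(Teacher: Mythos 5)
Your proposal is correct and follows exactly the route the paper intends: the paper offers no separate proof of this corollary, stating only that it is "obtained in a convenient way" from formula (\ref{solhar}), and your unpacking — harmonic measure on $\partial D$ for the $H_D^{\Delta-q}$ term, the change of variables $z=\sigma_\alpha y$ using the $\sigma_\alpha$-invariance of $w_k$ and of $\langle\cdot,\alpha\rangle^2$ for the Green-potential term, and $f\equiv 1$ with $\Delta_k1=0$ plus uniqueness for the unit mass — is precisely that computation carried out in detail.
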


\begin{corollary}
Let $D$ be a $\Delta$-regular bounded open set such that $\overline{D}$ is in some Domain of $\R^d\setminus\cup_{\alpha\in R_+}H_\alpha$.
Let $f$ be a continuous function on $\partial D\cup \left(\cup_{\alpha\in R_+}\sigma_\alpha(D)\right)$. If $f$ is locally H\"{o}lder continuous on $\cup_{\alpha\in R_+}\sigma(D)$ then $H_D^{\Delta_k}f\in C^2(D)$ and, for every $x\in D$,
$$
\Delta_k\left(H_D^{\Delta_k}f\right)(x)=0.
$$
\end{corollary}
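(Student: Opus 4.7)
The plan is to read both the $C^2$ regularity and the pointwise Dunkl equation directly out of the explicit formula (\ref{solhar}) of Theorem~\ref{t1}. Write $h := H_D^{\Delta_k}f$ and $u := h\sqrt{w_k}$, so that on $D$
$$
u = H_D^{\Delta-q}(f\sqrt{w_k}) + G_D^{\Delta-q}(\sqrt{w_k}\,Nf).
$$
Since $\overline{D}$ lies in a connected component of $\R^d\setminus \cup_{\alpha\in R_+}H_\alpha$, both $\sqrt{w_k}$ and $q$ are $C^\infty$ on a neighborhood of $\overline{D}$, with $\sqrt{w_k}$ bounded away from zero there. Consequently, the $C^2$ regularity of $h$ on $D$ is equivalent to that of $u$.

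For the first summand of $u$, I would invoke interior elliptic regularity: $H_D^{\Delta-q}(f\sqrt{w_k})$ is continuous on $\overline{D}$ and satisfies $(\Delta-q)v=0$ on $D$ in the distributional sense with $C^\infty$ coefficient $q$, hence is $C^\infty(D)$. For the second summand, the key observation is that $\sqrt{w_k}\,Nf$ is locally H\"older on $D$: each reflection $\sigma_\alpha$ is smooth and maps $D$ into $\sigma_\alpha(D)$ where $f$ is locally H\"older by hypothesis, while the factor $\sqrt{w_k(x)}\,|\alpha|^2 k(\alpha)/\langle x,\alpha\rangle^2$ is $C^\infty$ on a neighborhood of $\overline{D}$. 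Classical Newtonian potential theory then gives $G_D^\Delta(\sqrt{w_k}\,Nf)\in C^2(D)$, and the bootstrap identity (\ref{vgd}) recalled above upgrades this to $G_D^{\Delta-q}(\sqrt{w_k}\,Nf)\in C^2(D)$. Thus $u\in C^2(D)$ and $h\in C^2(D)$.

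For the pointwise equation, the plan is to apply the algebraic identity (\ref{ddk}) from the proof of Theorem~\ref{t1}; although written there for test functions, it is a purely local computation valid for any $C^2$ function. Applied to $h$ it yields
$$
\sqrt{w_k(x)}\,\Delta_k h(x) = (\Delta-q)u(x) + \sqrt{w_k(x)}\,Nh(x), \qquad x\in D.
$$
Since $u\in C^2(D)$, the weak relations defining $H_D^{\Delta-q}$ and $G_D^{\Delta-q}$ can be integrated by parts and become pointwise on $D$, yielding $(\Delta-q)u = -\sqrt{w_k}\,Nf$ there. Finally, because $\overline{D}$ sits in one connected component of $\R^d\setminus\cup_\alpha H_\alpha$, for every $x\in D$ and every $\alpha\in R_+$ the reflected point $\sigma_\alpha x$ lies in $\sigma_\alpha(D)\subset \R^d\setminus D$, where $h=f$; hence $Nh=Nf$ on $D$ and the identity collapses to $\Delta_k h=0$ on $D$.

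The main obstacle is the $C^2$ regularity of $G_D^{\Delta-q}(\sqrt{w_k}\,Nf)$: this is precisely where the local H\"older hypothesis on $f$ is genuinely used, since mere continuity of the source would yield only a $C^1$ Newtonian potential and one could not then make sense of $\Delta_k h$ pointwise. Everything else is bookkeeping with (\ref{solhar}) and (\ref{ddk}) together with the geometric remark that the reflections send $D$ outside $\overline{D}$.
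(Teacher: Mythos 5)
Your proposal is correct, and its first half --- the $C^2$ regularity --- is exactly the paper's argument: hypoellipticity of $\Delta-q$ handles $H_D^{\Delta-q}(f\sqrt{w_k})$, the local H\"older hypothesis gives $G_D^{\Delta}(\sqrt{w_k}Nf)\in C^2(D)$ by classical potential theory, and the bootstrap identity (\ref{vgd}) transfers this to $G_D^{\Delta-q}(\sqrt{w_k}Nf)$. You also correctly identify the H\"older hypothesis as the crux. The only genuine divergence is in the verification that $\Delta_k h=0$ pointwise. The paper argues by duality: using (\ref{ddk}) it shows $\int \Delta_k h\,\varphi\, w_k\,dx=\int h\,\Delta_k\varphi\, w_k\,dx=0$ for all $\varphi\in C^\infty_c(D)$, and then the continuity of $\Delta_k h$ forces it to vanish. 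You instead compute $\Delta_k h$ directly: you note that (\ref{ddk}), though stated for test functions, is a local algebraic identity valid for any $C^2$ function, upgrade the weak equations for $H_D^{\Delta-q}$ and $G_D^{\Delta-q}$ to pointwise ones (legitimate once $u\in C^2$), and use the chamber geometry to see that $\sigma_\alpha(D)\cap D=\emptyset$, hence $Nh=Nf$ on $D$. Your route is slightly more explicit and makes visible the geometric fact (implicit in the paper's very definition of $Nf$) that the reflections push $D$ into $\R^d\setminus D$; the paper's route is shorter but leans on the reader to supply the final ``continuous function orthogonal to all test functions vanishes'' step. Both are sound.
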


\begin{proof}
Since $H_D^{\Delta-q}(f\sqrt{w_k})$ is a solution of the Schr\"{o}dinger's equation (\ref{shr}), the hypoellipticity of the operator $\Delta-q$ on $D$ implies that $H_D^{\Delta-q}(f\sqrt{w_k})\in C^\infty(D)$. Moreover, since $Nf$ is locally H\"{o}lder continuous on $D$, $G_D^\Delta\left(\sqrt{w_k}Nf\right) \in C^2(D)$  and consequently $G_D^{\Delta-q}\left(\sqrt{w_k}Nf\right)\in C^2(D)$. Then it follows from (\ref{solhar}) that $H_D^{\Delta_k}f\in C^2(D)$. For every $\varphi\in C^\infty_c(D)$, direct computation using  (\ref{ddk}) yields
$$
 \int \Delta_k\left(H_D^{\Delta_k}f\right)(x)\varphi(x) w_k(x) dx=\int H_D^{\Delta_k}f(x)\Delta_k\varphi(x) w_k(x) dx.
$$
This completes the proof.
\end{proof}

Let $D$ be an open subset of $\R^d$. The operator $\Delta_k$ is said to be hypoelliptic on $D$ if, for every $f\in C^\infty(D)$, every continuous function $h$ on $\R^d$ which satisfies
$$
\int_{\R^d}h(x)\Delta_k\varphi(x) w_k(x) dx =\int  f(x)\varphi(x) w_k(x) dx \quad \textrm{ for every }\; \varphi \in C^\infty_c(D)
$$
is infinitely differentiable on $D$. We note that the problem of the hypoellipticity of $\Delta_k$  is discussed in \cite{kh,hmkt}, where the authors show that  $\Delta_k$ is hypoelliptic on $D$ provided  $D$ is  invariant under all reflections $\sigma_\alpha$. However, if $D$ is not invariant, the question whether $\Delta_k$ is hypoelliptic on $D$ or not remaind open.

\begin{theorem}
Let $D$ be a $\Delta_k$-regular open set. Then $\Delta_k$ is  hypoelliptic on $D$ if and only if $D$ is invariant.
\end{theorem}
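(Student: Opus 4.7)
The ``if'' direction is precisely the hypoellipticity result of \cite{kh,hmkt}, so the plan is to establish the converse by producing, whenever $D$ is $\Delta_k$-regular but not invariant, a continuous $h\colon\R^d\to\R$ satisfying
\[
\int h\,\Delta_k\varphi\,w_k\,dx=0\qquad\text{for every }\varphi\in C^\infty_c(D)
\]
(so that $f\equiv 0\in C^\infty(D)$ serves in the definition of hypoellipticity), while $h$ fails to be $C^\infty$ on $D$. I will take $h:=H_D^{\Delta_k}g$ for a carefully chosen continuous datum $g$ on $\R^d\setminus D$, and extract its non-smoothness directly from the decomposition~(\ref{solhar}) of Theorem~\ref{t1}.

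Under the standing assumption that $\overline{D}$ lies in a single connected component of $\R^d\setminus\cup_\alpha H_\alpha$, each reflected set $\sigma_\alpha(D)$ sits in a distinct chamber, and so $\sigma_\alpha(D)\subset\R^d\setminus D$ for every $\alpha\in R_+$. Fix $\alpha\in R_+$ with $k(\alpha)>0$ witnessing that $D$ is not invariant, choose $y_0\in\sigma_\alpha(D)$, and set $x_0:=\sigma_\alpha y_0\in D$. Pick a smooth cut-off $\chi$, supported in a ball $B(y_0,\rho)\subset\sigma_\alpha(D)$ and identically $1$ near $y_0$, with $\rho>0$ small enough that $\sigma_\beta x_0\notin B(y_0,\rho)$ for every $\beta\in R_+\setminus\{\alpha\}$. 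Define $g(y):=\chi(y)\,|y-y_0|$ on $\R^d\setminus D$; then $g$ is continuous, compactly supported in $\sigma_\alpha(D)$, and not $C^1$ at $y_0$.

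Setting $h:=H_D^{\Delta_k}g$, formula~(\ref{solhar}) reads on $D$
\[
\sqrt{w_k(x)}\,h(x)=H_D^{\Delta-q}\!\bigl(g\sqrt{w_k}\bigr)(x)+G_D^{\Delta-q}\!\bigl(\sqrt{w_k}\,Ng\bigr)(x).
\]
Since $q\in C^\infty(D)$ and the coefficients of $\Delta-q$ are smooth, elliptic regularity gives $H_D^{\Delta-q}(g\sqrt{w_k})\in C^\infty(D)$. The choice of $\rho$ kills every summand with $\beta\neq\alpha$ in $Ng$ on some neighborhood $U$ of $x_0$, leaving
\[
Ng(x)=\frac{|\alpha|^2 k(\alpha)}{\langle x,\alpha\rangle^2}\,|x-x_0|\qquad(x\in U),
\]
where I have used that $\sigma_\alpha$ is an isometry with $\sigma_\alpha y_0=x_0$ and that $\chi\equiv 1$ near $y_0$. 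Thus $\phi:=\sqrt{w_k}\,Ng$ is bounded on $D$ but not $C^1$ at $x_0$; were $G_D^{\Delta-q}\phi$ smooth in a neighborhood of $x_0$, the distributional identity $(\Delta-q)G_D^{\Delta-q}\phi=-\phi$ would force $\phi$ to be smooth there as well, a contradiction. Dividing by the smooth, non-vanishing factor $\sqrt{w_k}$ transports this defect to $h$, so $h\notin C^\infty(D)$, contradicting hypoellipticity.

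The delicate point is to arrange that the singularity introduced at $y_0$ by $g$ is neither cancelled by contributions from other roots $\beta\neq\alpha$, nor smoothed away by the Green potential. The former is handled by shrinking the support of $\chi$; the latter is handled by the defining distributional inverse identity for $G_D^{\Delta-q}$, which transfers any loss of regularity of $\phi$ back onto $G_D^{\Delta-q}\phi$. Everything else reduces to a routine application of Theorem~\ref{t1} together with the elliptic regularity of $\Delta-q$.
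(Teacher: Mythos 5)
Your analytic mechanism is exactly the paper's: decompose the solution via Theorem~\ref{t1}, observe that the $H^{\Delta-q}$ part is smooth by hypoellipticity of $\Delta-q$, and use the distributional identity $(\Delta-q)G^{\Delta-q}\phi=-\phi$ to transfer a deliberately planted non-smoothness of $Nf$ back onto $h$. Your explicit datum $g=\chi\,|y-y_0|$ even makes the final contradiction cleaner than the paper's somewhat terse ``$g_2$ is not trivial''. But there is a genuine gap of scope. Theorem~2 assumes only that $D$ is a $\Delta_k$-regular \emph{open set}; it is meant to apply to sets $D$ that may meet the hyperplanes $H_\alpha$, may have $\sigma_\alpha(D)$ overlap $D$ partially, and need not be $\Delta$-regular. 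You silently impose the ``standing assumption'' that $\overline{D}$ lies in a single chamber and then apply formula~(\ref{solhar}) to $D$ itself. That is doubly problematic: first, (\ref{solhar}) is only available when $D$ is $\Delta$-regular (Theorem~\ref{t1} gives $\Delta$-regular $\Rightarrow$ $\Delta_k$-regular, not the converse), so you cannot invoke it from $\Delta_k$-regularity alone; second, under your reading every nonempty $D$ satisfies $\sigma_\alpha(D)\subset\R^d\setminus D$ for all $\alpha$, so $D$ is never invariant and the theorem degenerates to a near-vacuous statement that misses its actual content (e.g.\ invariant $D$, or $D$ crossing a wall with $\sigma_\alpha(D)\cap D\neq\emptyset$).

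The missing idea is localization. The paper picks a small ball $B$ with $\overline{B}\subset D\setminus\cup_{\alpha\in R_+}H_\alpha$, chosen so that for \emph{each} $\alpha$ either $\sigma_\alpha(B)\subset D$ or $\sigma_\alpha(B)\subset\R^d\setminus D$, with the latter occurring for at least one $\alpha$ (this uses non-invariance of $D$). Since a ball is certainly $\Delta$-regular and avoids the walls, Theorem~\ref{t1} applies on $B$ with boundary data $h|_{\R^d\setminus B}$, and $Nh$ splits on $B$ into a part $g_1$ built from values of $h$ inside $D$ (smooth if $h$ were smooth) and a part $g_2$ built from the prescribed exterior data $f$; the Green-potential argument then forces $g_2\in C^\infty(B)$, which fails for suitably chosen $f$. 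Your singular-datum construction would slot nicely into this localized framework, but without the localization step your proof does not establish the theorem as stated. Two smaller points you should also address: you need some witnessing root with $k(\alpha)>0$ (otherwise $\Delta_k=\Delta$ is hypoelliptic regardless of invariance), and you need $y_0$ chosen so that $\sigma_\beta\sigma_\alpha x_0\neq x_0$ for $\beta\neq\alpha$, so that no other reflected copy of the singularity lands at $x_0$; a generic choice of $y_0$ handles this, but it should be said.
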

\begin{proof}
It is obviously sufficient to prove that $\Delta_k$ is not hypoelliptic on $D$ provided $D$ is not invariant. Assume that $D$ is not invariant. Since
 the open set $D\setminus\cup_{\alpha\in R_+}H_\alpha$ is also not invariant, there exists a nonempty  open ball $B$
such that
$$
\overline{B}\subset D\setminus\cup_{\alpha\in R_+}H_\alpha\quad\textrm{and}\quad\sigma_\alpha(B)\subset \R^d\setminus D\;\textrm{ for some }\;\alpha\in R_+.
$$
 We also choose the ball $B$ small enough such that, for every $\alpha\in R_+$,
$$
\sigma_\alpha(B)\subset D \quad \textrm{ or } \quad \sigma_\alpha(B)\subset\R^d\setminus D.
$$
Let $I :=\{\alpha\in R_+;\; \sigma_\alpha(B)\subset\R^d\setminus D\}$ and $J :=R_+\setminus I$.
Let $f$ be a continuous function on $\R^d\setminus D$ and denote $H_D^{\Delta_k}f$ by $h$.
Since $B$ is $\Delta$-regular and $h$ satisfies
$$
\int h(x)\Delta_k\varphi(x) w_k(x) dx=0 \quad \textrm{ for every}\; \varphi\in C^\infty_c(B),
$$
 it follows from Theorem \ref{t1} that $B$ is $\Delta_k$-regular and, for every $x\in B$,
\begin{equation}\label{hyp}
h(x)=\frac{1}{\sqrt{w_k(x)}}\left(H_B^{\Delta-q}(h\sqrt{w_k})(x)+G_B^{\Delta-q}\left(\sqrt{w_k}Nh\right)(x)\right).
\end{equation}
Let $g_1$ and $g_2$ be the functions defined on $B$ by

$$
g_1(x)=\sum_{\alpha\in J}\frac{|\alpha|^2k(\alpha)}{\langle x,\alpha\rangle^2}h(\sigma_\alpha x)\quad
\textrm{and}
\quad
g_2(x)=\sum_{\alpha\in I}\frac{|\alpha|^2k(\alpha)}{\langle x,\alpha\rangle^2}f(\sigma_\alpha x).
$$
It is clear that the function $g_2$ is not trivial and $Nh=g_1+g_2$.
Now, assume that $h\in C^\infty(D)$. Then $g_1\in C^\infty(B)$ and therefore $G_B^{\Delta-q}\left(\sqrt{w_k}g_1\right)\in C^\infty(B)$. Furthermore, since $H_B^{\Delta-q}(h\sqrt{w_k})\in C^\infty(B)$, it follows from  (\ref{hyp}) that $G_B^{\Delta-q}\left(\sqrt{w_k}g_2\right)\in C^\infty(B)$. Thus $-(\Delta-q)G_B^{\Delta-q}\left(\sqrt{w_k}g_2\right)=\sqrt{w_k}g_2\in C^\infty(B)$ and therefore $g_2\in C^\infty(B)$, a contradiction.
Hence $h$ is not infinitely differentiable on $D$ and consequently the Dunkl Laplacian $\Delta_k$ is not hypoelliptic on $D$.
\end{proof}

\end{document}